\newtheorem{theorem}{Theorem}
\newtheorem{lemma}{Lemma}
\newcommand{\ctg}[1]{\hbox{\bf #1}}
\newcommand{\Sets}{\ctg{Sets}}
\newcommand{\nglob}[1]{\ctg{Glob}_{#1}}
\newcommand{\oglob}{\nglob{\omega}}
\newcommand{\ncat}[1]{\ctg{Cat}_{#1}}
\newcommand{\ocat}{\ncat{\omega}}
\newcommand{\npol}[1]{\ctg{Pol}_{#1}}
\newcommand{\opol}{\npol{\omega}}
\newcommand{\doubl}[2]{\ar@<2pt>[l]^{#2}\ar@<-2pt>[l]_{#1}}
\newcommand{\doubr}[2]{\ar@<2pt>[r]^{#2}\ar@<-2pt>[r]_{#1}}
\newcommand \free[1] {#1^{\ast}}
\newcommand \unit[1]{1_{#1}}
\newcommand{\adjd}[2]{\ar@<1ex>[d]^{#1}_{\dashv}\ar@<-1ex>@{<-}[d]_{#2}}
\newcommand{\adjr}[2]{\ar@<1ex>[r]^{#1}_{\top}\ar@<-1ex>@{<-}[r]_{#2}}
\newcommand{\UU}{U} 
\newcommand{\WW}{V} 
\newcommand{\GG}{G} 
\newcommand \sce[1] {s_{#1}}
\newcommand \tge[1] {t_{#1}}
\newcommand \trip[3] {(#1,#2,#3)}
\newcommand \NN {\mathbb N} 
\newcommand \counit[1]{\epsilon^{#1}}
\newcommand \inc{j} 
\newcommand \lift[1] {\overline{#1} }
\title{Strict $\omega$-categories are monadic over polygraphs}
\author{Fran\c cois M\'etayer}
\address{Universit\'e Paris Ouest Nanterre La D\'efense, IRIF,
          UMR 8243 CNRS, Univ Paris Diderot, Sorbonne Paris Cit\'e,
          F-75205 Paris, France}
\email{metayer@pps.univ-paris-diderot.fr}
\thanks{Supported by Cathre project, ANR-13-BS02-0005}
\keywords{$\omega$-categories, polygraphs, monads}
\subjclass[2010]{18D05,18C15}
\begin{document}

\begin{abstract}
  We give a direct proof that the category of strict
  $\omega$-categories is monadic over the category of polygraphs.
\end{abstract}

\maketitle

\section*{Introduction}

This short note presents a proof of monadicity for the adjunction
between the category $\ocat$
of strict $\omega$-categories and the category $\opol$ of
polygraphs (or computads, as first introduced by Street
in~\cite{street:limicf}). Here we follow the presentation and
terminology of~\cite{burroni:highdw,metayer:respol}. The reader may
consult~\cite{metayer:cofohc} for a detailed description of the
categories and functors referred to in this particular case,
or~\cite{batanin:comfmg} for a broader perspective including
generalized ``$A$-computads'' for a monad $A$  on globular sets. 
The latter paper rightly asserts the monadicity
theorem, but some parts of the proof rely on the fact that the
category of $A$-computads is a presheaf category, which is precisely
not true in the present case, where $A$ is the monad of strict
$\omega$-categories~\cite{makkaizawadowski:tconcc,cheng:dircat}.
Since then, the status
of monadicity for $\ocat$ has remained somewhat unclear (see e.g the entry
``computad'' on the $n$Lab~\cite{nlab:comput}).  Our proof is
based on the same ideas as developed in~\cite{batanin:comfmg}, except that
we avoid the presheaf argument and establish instead a lifting
result~(Lemma~\ref{lemma:globlifting}), possibly of independent interest.

As for notations, whenever a functor $F$ is a right-adjoint, we denote
its left-adjoint by $\free F$. Let us finally mention a small point
about terminology. Given a functor $F:\ctg{A}\to \ctg{B}$, with
left-adjoint $\free F$, and $T=F\free F$ the associated monad
on $\ctg{B}$, there is a comparison functor $K$ from
$\ctg{A}$ to the category $\ctg{B}^T$ of $T$-algebras: we call $F$
{\em monadic} if $K$ is an equivalence of categories, and {\em
  strictly monadic} if $K$ is an isomorphism. We refer
to~\cite[VI.7]{maclane:catfwm} for corresponding variants of Beck's
monadicity criterion.

\section{Three adjunctions}

In this section, we briefly describe three pairs of adjoint functors between categories $\oglob$ of globular sets,
$\ocat$ of strict $\omega$-categories and $\opol$ of
polygraphs. 
\paragraph{}
As $\omega$-categories are globular sets with extra structure, there
is an obvious forgetful functor
\begin{displaymath}
  \UU:\ocat\to \oglob
\end{displaymath}
This functor $\UU$ has a left-adjoint $\free\UU$ taking a globular set $X$ to the
$\omega$-category $\free\UU X$ it generates. Moreover this adjunction
is strictly monadic.
\paragraph{}
A second adjunction involves functors
$\WW,\free\WW$ between $\ocat$ and $\opol$. Unlike $\UU$, the right
adjoint  $\WW$ is not quite obvious. 
Thus, let $C$ be an $\omega$-category, 
the polygraph $P=\WW(C)$ is defined by induction,
together with a morphism $\counit{C}:\free\WW (P)\to C$:
\begin{itemize}
\item For $n=0$, $P_0=C_0$ and $\counit{C}_0$ is the identity.
\item Suppose $n>0$, and $P$, $\counit{C}$ have been defined up to
  dimension $n{-}1$. The set of $n$-generators of $P$ is then the set
  $P_n$ of triples $p=\trip{z}{x}{y}$ where $z\in C_n$, $x$, $y$ are
  parallel cells in $\free{P}_{n-1}$ and
  $z:\counit{C}_{n-1}(x)\to\counit{C}_{n-1}(y)$. The source and target
  of $p$ in $\free{P}_{n-1}$ are $x=\sce{n-1}(p)$ and $y=\tge{n-1}(p)$
  respectively, and $\counit{C}_{n}(p)=z$. By the universal property
  of polygraphs, $\counit{D}_{n}$ extends uniquely to a map from
  $\free{P}_n$ to $C_n$ preserving compositions and
  identities. Functoriality of $\WW$ is immediate and  $\WW$ is in fact right-adjoint to
  $\free\WW$ (see~\cite{batanin:comfmg,metayer:respol}). 
\end{itemize}
Note that
  \begin{displaymath}
    \counit{C}:\free\WW\WW(C)\to C
  \end{displaymath}
is the counit of this adjunction and determines the
standard polygraphic resolution of $C$.
\paragraph{}
We finally describe a functor
\begin{displaymath}
  \GG:\opol\to\oglob
\end{displaymath}
Let $P$ be a polygraph. Let us denote by $\inc_n:P_n\to \free P_n$ the
canonical inclusion of the set  of $n$-generators of $P$ into the set of
$n$-cells of $\free P=\free\WW(P)$. We define the globular set
$X=\GG(P)$ dimensionwise, so that for each $n\in\NN$, $X_n\subset
P_n$:
\begin{itemize}
\item For $n=0$, $X_0=P_0$.
\item Let $n>0$ and suppose we have defined $X_k\subset P_k$ for all
  $k<n$, together with source and target maps building an
  $n{-}1$-globular set. Let $X_n\subset P_n$ be the set of
  $n$-generators $a$ of $P$ such that $\sce{n-1}(a)$ and
  $\tge{n-1}(a)$ belong to $\inc_{n-1}(X_{n-1})$ and define source and
  target maps $\sce{n-1}^X,\tge{n-1}^X:X_n\to X_{n-1}$ as the unique
  maps such that $\inc_{n-1}\sce{n-1}^X(a)=\sce{n-1}(a)$ and
  $\inc_{n-1}\tge{n-1}^X(a)=\tge{n-1}(a) $ for each $a\in X_n$. This
  extends $X$ to an $n$-globular set.
  \begin{equation}\label{diag:gdef}
    \xymatrix{X_n \ar@<2pt>[d]^{\tge{n-1}^X}\ar@<-2pt>[d]_{\sce{n-1}^X}\ar@{^{(}->}[r]& P_n \ar@<2pt>[rd]^{\tge{n-1}}\ar@<-2pt>[rd]_{\sce{n-1}}& \\
                      X_{n-1} \ar@{^{(}->}[r]& P_{n-1}\ar[r]_{\inc_{n-1}} & \free P_{n-1}}
  \end{equation}
 \end{itemize}
The previous construction is clearly functorial and defines the
required functor $\GG$. Remark that $\GG$ admits a left adjoint
$\free\GG:\oglob\to\opol$ which takes the globular set $X$ to a
polygraph $P$ such that $P_n=X_n$, in other words $\free\GG$
defines a
natural inclusion of $\oglob$ into $\opol$.

Note that $\GG$ forgets all generators of $P$
that are not ``hereditary  globular'', so that for instance $\GG(P)$
may have no cells at all  beyond dimension $1$. However, the following
result shows that the functor $\GG$ is not always trivial.

\begin{lemma}\label{lemma:ufactor}
  There is a natural isomorphism  $\phi:\GG\WW\to \UU$, that is,
  the following diagram commutes up to a natural isomorphism
  \begin{equation}
    \label{diag:ufactor}
    \xymatrix{\ocat \ar[r]^{\WW}\ar[d]_{\UU}& \opol\ar[ld]^{\GG}\\
                    \oglob &}
  \end{equation}
\end{lemma}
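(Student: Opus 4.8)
The plan is to construct $\phi$ dimensionwise by induction, verifying at each stage that its component is a bijection compatible with source and target, and then to check naturality. Fix $C\in\ocat$ and write $P=\WW(C)$, $X=\GG(P)=\GG\WW(C)$, so that $X_n\subset P_n$ for all $n$; recall that an $n$-generator of $P$ is a triple $p=\trip{z}{x}{y}$ with $z\in C_n$ and that $\counit{C}_n\inc_n(p)=z$. I would take $\phi_C$ in dimension $0$ to be the identity of $C_0$ and, for $n>0$, let $\phi_{C,n}\colon X_n\to C_n$ be the restriction to $X_n$ of $\counit{C}_n\inc_n$, i.e. $\phi_{C,n}(p)=z$ for $p=\trip{z}{x}{y}\in X_n$. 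To see that $\phi_C$ commutes with source and target, observe that if $p=\trip{z}{x}{y}\in X_n$ then, by the definition of $\GG$ (diagram~\eqref{diag:gdef}), $x=\sce{n-1}(p)$ and $y=\tge{n-1}(p)$ lie in $\inc_{n-1}(X_{n-1})$, say $x=\inc_{n-1}(x')$ and $y=\inc_{n-1}(y')$ with $x'=\sce{n-1}^X(p)$, $y'=\tge{n-1}^X(p)$ in $X_{n-1}$; since $\counit{C}_{n-1}\inc_{n-1}(x')=\phi_{C,n-1}(x')$ by definition of $\phi_{C,n-1}$ (and likewise for $y'$), while $z\colon\counit{C}_{n-1}(x)\to\counit{C}_{n-1}(y)$ by the definition of $\WW$, one obtains $\sce{n-1}(\phi_{C,n}(p))=\phi_{C,n-1}(\sce{n-1}^X(p))$ and its dual.

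Next I would prove by induction on $n$ that $\phi_{C,n}$ is bijective, so that, by the previous paragraph, the truncation of $\phi_C$ to dimensions below $n$ is an isomorphism of $(n{-}1)$-globular sets. Injectivity is immediate: a hereditarily globular $n$-generator $\trip{z}{x}{y}$ is determined by $z$, because $x$ and $y$ are forced to be $\inc_{n-1}$ of the unique cells of $X_{n-1}$ that $\phi_{C,n-1}$ sends to $\sce{n-1}(z)$ and $\tge{n-1}(z)$. For surjectivity, given $z\in C_n$ the pair $\pair{\sce{n-1}(z)}{\tge{n-1}(z)}$ is parallel in $C_{n-1}$, hence by the induction hypothesis equals $\phi_{C,n-1}$ of a unique parallel pair $\pair{x'}{y'}$ in $X_{n-1}$; then $\pair{\inc_{n-1}(x')}{\inc_{n-1}(y')}$ is parallel in $\free{P}_{n-1}$ (again by~\eqref{diag:gdef}), and since $\counit{C}_{n-1}\inc_{n-1}(x')=\sce{n-1}(z)$ and $\counit{C}_{n-1}\inc_{n-1}(y')=\tge{n-1}(z)$, the triple $p=\trip{z}{\inc_{n-1}(x')}{\inc_{n-1}(y')}$ is a genuine $n$-generator of $P$, is hereditarily globular, and satisfies $\phi_{C,n}(p)=z$. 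Hence $\phi_C\colon\GG\WW(C)\rto\UU(C)$ is an isomorphism of globular sets.

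For naturality, given $f\colon C\to D$ I would recall from the (immediate) construction of $\WW$ on morphisms together with naturality of $\counit{}$ that $\WW(f)$ carries an $n$-generator $\trip{z}{x}{y}$ of $\WW(C)$ to an $n$-generator of $\WW(D)$ whose first component is $f_n(z)$; a parallel induction shows it sends hereditarily globular generators to hereditarily globular ones, so that $\GG\WW(f)$ is the evident restriction. Then for $p=\trip{z}{x}{y}\in(\GG\WW C)_n$ one computes $\phi_{D,n}(\GG\WW(f)_n(p))=f_n(z)=\UU(f)_n(\phi_{C,n}(p))$, which is precisely the naturality square.

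I expect the only step with genuine content to be surjectivity above: that is exactly where one uses both that $\WW(C)$ was built so that every cell of $C$ supplies a generator and that the induction hypothesis allows a parallel pair of $(n{-}1)$-cells of $C$ to be lifted along $\phi_{C,n-1}$. A shorter but less explicit route would be to observe that $\free\GG\dashv\GG$ and $\free\WW\dashv\WW$ compose to $\free\WW\free\GG\dashv\GG\WW$, that $\free\WW\free\GG\cong\free\UU$ since both send a globular set to the $\omega$-category it generates, and hence $\GG\WW\cong\UU$ by uniqueness of adjoints; I would nonetheless keep the explicit $\phi$ above, since its concrete description is likely to be needed downstream.
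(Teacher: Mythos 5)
Your construction is exactly the paper's: you define $\phi^C_n$ as the restriction of $\counit{C}_n\inc_n$ to $X_n$, check compatibility with sources and targets via~(\ref{diag:gdef}), and prove bijectivity by induction (your surjectivity argument is precisely the paper's inductive construction of the section $\chi^C$, and your injectivity argument matches its induction as well), with naturality verified on generators. The proposal is correct and takes essentially the same approach as the paper; the adjoint-uniqueness route you mention at the end is a valid alternative but is not the one the paper uses.
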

\begin{proof}
  Let $C$ be an $\omega$-category, and $X=\GG\WW(C)$. For each
  $n\in\NN$, let $\phi^C_n:X_n\to C_n$ be the composition of the
  following maps
  \begin{displaymath}
    \xymatrix{X_n \ar@{^{(}->}[r]& \WW(C)_n \ar[r]^(.4){\inc_n}& \free\WW\WW(C)_n \ar[r]^(.6){\counit{C}_n}& C_n}
  \end{displaymath}
As $\counit{C}$ is an $\omega$-morphism and~(\ref{diag:gdef})
commutes, the family $(\phi^C_n)_{n\in\NN}$ defines a globular morphism
$\phi^C:\GG\WW(C)\to \UU(C)$, natural in $C$. Thus we get a natural
transformation $\phi:\GG\WW\to\UU$.  
 
Let us now define $\chi^C_n:C_n\to X_n$ by induction on $n$ such that
$\phi^C_n\circ \chi^C_n=\unit{C_n}$: 
\begin{itemize}
\item For $n=0$, $X_0=C_0$ and $\phi^C_0=\unit{C_0}=\unit{X_0}$, so that
  $\chi^C_0:C_0\to X_0$ is also $\unit{C_0}=\unit{X_0}$.
\item Suppose $n>0$ and $\chi^C_k$ has been defined up to $k=n{-}1$,
  and let $z\in C_n$. Let $u=\sce{n-1}(z)$ and $v=\tge{n-1}(z)$ in $C_{n-1}$.
  By induction hypothesis, 
  $\chi^C_{n-1}(u)$ and $\chi^C_{n-1}(v)$ belong to $X_{n-1}$. Let
  $x=\inc_{n-1}\chi^C_{n-1}(u)$, $y=\inc_{n-1}\chi^C_{n-1}(v)$ in $\free\WW\WW(C)_{n-1}$
  and define $a=\chi^C_n(z)=\trip{z}{x}{y}$. By construction $a\in X_{n}$ and $\phi^C_n(a)=z$.
\end{itemize}
It remains to prove that $\phi^C_n$ is injective. We reason again by
induction on $n$:
\begin{itemize}
\item For $n=0$, $\phi^C_0$ is an identity, hence injective.
\item Suppose $n>0$ and $\phi^C_{n-1}$ injective. Let
  $a_i=\trip{z_i}{x_i}{y_i}\in X_n$ for $i=0,1$ such that
  $\phi^C_n(a_0)=\phi^C_n(a_1)$. Thus $z_0=z_1$. Also
  \begin{eqnarray*}
    \phi^C_{n-1}(\sce{n-1}^X(a_0)) & = & \sce{n-1}(\phi^C_n(a_0))\\
                                            & = &\sce{n-1}(\phi^C_n(a_1))\\
                                             & =& \phi^C_{n-1}(\sce{n-1}^X(a_1))
  \end{eqnarray*}
and because $\phi^C_{n-1}$ is injective, 
\begin{displaymath}
  \sce{n-1}^X(a_0)=\sce{n-1}^X(a_1)
\end{displaymath}
Now
\begin{eqnarray*}
  x_0 & = & \sce{n-1}(a_0)\\
        & = & \inc_{n-1}\sce{n-1}^X(a_0)\\
        & = & \inc_{n-1}\sce{n-1}^X(a_1)\\
        & = & \sce{n-1}(a_1)\\
        & = & x_1
\end{eqnarray*}
Likewise $y_0=y_1$, and we get $a_0=a_1$. Hence $\phi^C_n$ is
injective and we are done.
\end{itemize}
\end{proof}

\section{Lifting lemma}
The forgetful functor $\UU:\ocat\to\oglob$ is faithful, but clearly not
full. However, globular morphisms lift to $\omega$-morphisms in the
sense of the following result:
\begin{lemma}\label{lemma:globlifting}
   Let $C$, $D$ be $\omega$-categories and $\alpha:\UU(C)\to \UU(D)$ be a
   globular morphism. Then there is a unique morphism $\lift{\alpha}:\WW(C)\to \WW(D)$
in $\opol$ such that the following square commutes:
\begin{equation}
  \xymatrix @C=1.5cm{\UU\free\WW\WW(C)\ar[d]_{\UU(\counit{C})}\ar[r]^{\UU\free{\WW}(\lift{\alpha})} & \UU\free\WW\WW(D)\ar[d]^{\UU(\counit{D})}\\
\UU(C)  \ar[r]_{\alpha}&\UU(D)}
\label{diag:lift}
\end{equation}
 \end{lemma}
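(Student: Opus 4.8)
\emph{Proof strategy.}
The plan is to construct $\lift{\alpha}$ by induction on dimension. Recall that a polygraph morphism is determined on the cells of dimension $n$ by its values on generators of dimension at most $n$; so, once $\lift{\alpha}$ has been specified in dimensions $\le n$, the $\omega$-morphism $\beta := \free\WW(\lift{\alpha})$ is determined on all cells of dimension $\le n$. Accordingly I will define $\lift{\alpha}$ in dimension $n$ and at the same time check that~(\ref{diag:lift}) commutes in dimension $n$, carrying the commutativity of~(\ref{diag:lift}) in lower dimensions as the induction hypothesis.

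For $n=0$ the maps $\counit{C}_0$ and $\counit{D}_0$ are identities, so~(\ref{diag:lift}) forces $\lift{\alpha}_0 = \alpha_0$, and this choice works. Now assume $\lift{\alpha}$ is defined up to dimension $n-1$, and let $a = \trip{z}{x}{y}$ be an $n$-generator of $\WW(C)$, so $z \in C_n$ and $x,y$ are parallel cells of $\free\WW\WW(C)_{n-1}$ with $z : \counit{C}_{n-1}(x) \to \counit{C}_{n-1}(y)$. Since $\beta$ is an $\omega$-morphism, $\beta_{n-1}(x)$ and $\beta_{n-1}(y)$ are parallel in $\free\WW\WW(D)_{n-1}$; and since $\alpha$ is globular and~(\ref{diag:lift}) commutes in dimension $n-1$,
\begin{displaymath}
  \sce{n-1}\bigl(\alpha_n(z)\bigr) = \alpha_{n-1}\bigl(\counit{C}_{n-1}(x)\bigr) = \counit{D}_{n-1}\bigl(\beta_{n-1}(x)\bigr),
\end{displaymath}
and likewise for targets. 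Hence $\trip{\alpha_n(z)}{\beta_{n-1}(x)}{\beta_{n-1}(y)}$ is a well-formed $n$-generator of $\WW(D)$, and I set $\lift{\alpha}_n(a)$ to be its image under $\inc_n$ in $\free\WW\WW(D)_n$. The equalities $\sce{n-1}\bigl(\lift{\alpha}_n(a)\bigr) = \beta_{n-1}\bigl(\sce{n-1}(a)\bigr)$ and $\tge{n-1}\bigl(\lift{\alpha}_n(a)\bigr) = \beta_{n-1}\bigl(\tge{n-1}(a)\bigr)$ hold by construction, so $\lift{\alpha}$ is a bona fide polygraph morphism up to dimension $n$, extending $\beta$ to that dimension. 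Uniqueness of $\lift{\alpha}$ will then follow from the same diagram: on a generator $a = \trip{z}{x}{y}$, commutativity forces $\counit{D}_n(\lift{\alpha}_n(a)) = \alpha_n(\counit{C}_n(a)) = \alpha_n(z)$, while the source and target of $\lift{\alpha}_n(a)$ are already pinned down in lower dimensions, so $\lift{\alpha}_n(a)$ has to be the generator just exhibited.

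What remains, and what I expect to be the crux of the argument, is the verification that~(\ref{diag:lift}) commutes in dimension $n$, i.e.\ that $\alpha_n \circ \counit{C}_n = \counit{D}_n \circ \beta_n$ as maps $\free\WW\WW(C)_n \to D_n$. On generators the two sides agree, both sending $\trip{z}{x}{y}$ to $\alpha_n(z)$ by construction; the difficulty is to extend the equality to \emph{arbitrary} cells of $\free\WW\WW(C)_n$. The obstruction is the asymmetry between the two composites: $\counit{D}_n \circ \beta_n$ respects every composition and every unit, whereas $\alpha_n \circ \counit{C}_n$ need not, since $\alpha$ is merely a morphism of globular sets. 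One must therefore follow the inductive generation of the cells of the free $\omega$-category $\free\WW\WW(C)$, and use both the lower-dimensional induction hypothesis and the particular form of the resolution functor $\WW$ — every cell of $C$ being encoded by a generator of $\WW(C)$ — to conclude that the two composites coincide there as well.
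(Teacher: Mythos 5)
Your construction of $\lift{\alpha}$ on generators, the check that $\trip{\alpha_n(z)}{\free{\lift{\alpha}}_{n-1}(x)}{\free{\lift{\alpha}}_{n-1}(y)}$ is a well-formed $n$-generator of $\WW(D)$, and your uniqueness argument coincide with the paper's proof (one small slip: in this setting a morphism of polygraphs sends $n$-generators to $n$-generators, so $\lift{\alpha}_n(a)$ should be that generator itself, not its image under $\inc_n$). The genuine gap is the step you explicitly postpone: the commutation of~(\ref{diag:lift}) in dimension $n$ on \emph{all} cells of $\free\WW\WW(C)_n$, not merely on generators. You rightly observe that $\alpha_n\circ\counit{C}_n$ need not respect compositions and units, so the universal property of the free $\omega$-category cannot be invoked; but you then only announce that one must ``follow the inductive generation of the cells'' without carrying this out, so the proof is incomplete exactly at what you yourself call the crux.

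Moreover, that deferred verification is not a routine induction, and in the stated generality it would fail. Take an identity cell $w=\unit{u}$ with $u\in\free\WW\WW(C)_{n-1}$. Since $\counit{D}\circ\free\WW(\lift{\alpha})$ is an $\omega$-functor, commutation in dimension $n$ combined with commutation in dimension $n-1$ forces
\begin{displaymath}
\alpha_n\bigl(\unit{\counit{C}_{n-1}(u)}\bigr)=\unit{\alpha_{n-1}(\counit{C}_{n-1}(u))},
\end{displaymath}
that is, $\alpha$ must preserve identities of every cell in the image of $\counit{C}$ --- and $\counit{C}$ is surjective in each dimension. A morphism of globular sets need not have this property: for instance, let $C=D$ be the $\omega$-category with a single object whose $1$-cells form the free monoid on one generator $g$ (only identity cells above), and let $\alpha$ be the globular map sending $g^n$ to $g^{n+1}$ in dimension $1$ (and determined accordingly above); then $\alpha_1(\unit{\ast})=g\neq \unit{\ast}$, so no choice of $\lift{\alpha}$ makes~(\ref{diag:lift}) commute on the identity $1$-cell of $\free\WW\WW(C)$. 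Hence agreement on generators does not propagate by ``following the generation of cells''; closing the gap requires either an extra hypothesis on $\alpha$ or a reformulation of the commutation condition. For comparison, the paper's own proof dispatches this very point with the bare assertion that it suffices to check on generators --- precisely the claim your final paragraph distrusts; you have located the sensitive spot, but you have not resolved it, and it cannot be resolved along the lines you sketch.
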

 
 \begin{proof}
   We build the required morphism $\lift{\alpha}:\WW(C)\to\WW(D)$ by induction
   on the dimension. Note that diagram~(\ref{diag:lift}) yields a
   diagram in $\Sets$ at any given dimension $n$. We may therefore drop
   the letter $\UU$ in the following computations. Also
   $\free{\lift{\alpha}}$ is short for $\free{\WW}(\lift{\alpha})$. 
   \begin{itemize}
   \item For $n=0$, we have $\WW(C)_0=C_0$, $\WW(D)_0=D_0$; also
     $\counit{C}_0$ and $\counit{D}_0$ are identities, so that
     $\lift{\alpha}_0=\alpha_0$ is the unique solution.
\item Suppose $n>0$ and we have defined $\lift{\alpha}$ satisfying the commutation
  condition, up to dimension $n{-}1$.
 Let $p=\trip{z}{x}{y}$ be an
  $n$-generator of $\WW(C)$. Suppose $\lift{\alpha}(p)=\trip{z'}{x'}{y'}$: the
  commutation condition implies $z'=\alpha(z)$,
  $x'=\free{\lift{\alpha}}_{n-1}(x)$ and $y'=\free{\lift{\alpha}}_{n-1}(x)$, so that $\lift{\alpha}$
  extends in at most one way to dimension $n$, and uniqueness
  holds. As for the existence, $x$, $y$ are parallel $(n{-}1)$-cells in
  $\free\WW\WW(C)_{n-1}$; by induction hypothesis, their images
  $x'=\free{\lift{\alpha}}_{n-1}(x)$ and $y'=\free{\lift{\alpha}}_{n-1}(x)$ are
  $(n{-}1)$-parallel cells in $\free\WW\WW(D)$. 
Again, by induction hypothesis,~(\ref{diag:lift}) commutes in dimension
$n{-}1$; also $\alpha$ is a globular map, hence
  \begin{eqnarray*}
    \sce{n-1}(z') & = & \sce{n-1}(\alpha_n(z))\\
                         & = & \alpha_{n-1}(\sce{n-1}(z))\\
                         & = & \alpha_{n-1}(\counit{C}_{n-1}(x))\\
                          & =&
                          \counit{D}_{n-1}(\free{\lift{\alpha}}_{n-1}(x))\\
                          & = & \counit{D}_{n-1}(x')
  \end{eqnarray*}
and likewise
\begin{displaymath}
  \tge{n-1}(z')= \counit{D}_{n-1}(y')
\end{displaymath}
Therefore $p'=\trip{z'}{x'}{y'}$ is an $n$-generator of $\WW(D)$. Also
$\sce{n-1}(p')=x'=\free{\lift{\alpha}}_{n-1}(x)=\free{\lift{\alpha}}_{n-1}(\sce{n-1}(p))$ and
$\tge{n-1}(p')=y'=\free{\lift{\alpha}}_{n-1}(y)=\free{\lift{\alpha}}_{n-1}(\tge{n-1}(p))$, so that
$\lift{\alpha}$ extends to a morphism in $\opol$ up to dimension $n$. Finally
the diagram~(\ref{diag:lift}) commutes in dimension $n$~: it is
sufficient to check this on generators, but
\begin{eqnarray*}
  \counit{D}_n\free{\lift{\alpha}}_n(p) & = & \counit{D}_n(p')\\
                                            & = & z'\\
                                            & = & \alpha_{n}(z)\\
                                            & = & \alpha_n\counit{C}_n(p)
\end{eqnarray*}
and we are done.
   \end{itemize}
 \end{proof}

\section{Monadicity}

We now turn to the main result.

\begin{theorem}\label{thm:monadic}
  The functor $\WW:\ocat\to\opol$ is monadic.
\end{theorem}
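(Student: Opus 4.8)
The plan is to verify the hypotheses of the crude form of Beck's monadicity criterion~\cite[VI.7]{maclane:catfwm} for $\WW$: that it has a left adjoint, that it is conservative, and that $\ocat$ has, and $\WW$ preserves, coequalizers of $\WW$-split pairs. The left adjoint $\free\WW$ is already in hand. Conservativity is immediate from Lemma~\ref{lemma:ufactor}: if $\WW(F)$ is invertible in $\opol$ then $\GG\WW(F)$, hence $\UU(F)$, is invertible in $\oglob$, and $\UU$ is conservative because it is monadic, so $F$ is invertible. Since $\ocat$ is cocomplete (it is locally presentable) the coequalizers in question exist, so the substantive point is their preservation by $\WW$.

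For that, I would start from a pair $f,g\colon C\rightrightarrows D$ such that $\WW f,\WW g$ admits a split coequalizer $\WW C\rightrightarrows\WW D\xrightarrow{q}Q$ in $\opol$. Split coequalizers are preserved by every functor, so applying $\GG$ and transporting along the natural isomorphism $\phi$ of Lemma~\ref{lemma:ufactor} yields a split coequalizer $\UU C\rightrightarrows\UU D\to\GG Q$ of $\UU f,\UU g$ in $\oglob$. As $\UU$ is monadic it creates coequalizers of $\UU$-split pairs, so $f,g$ have a coequalizer $h\colon D\to E$ in $\ocat$ with $\UU E=\GG Q$ and $\UU h$ equal to that coequalizer. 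Because $\WW h$ coequalizes $\WW f,\WW g$, there is a unique polygraph map $c\colon Q\to\WW E$ with $cq=\WW h$; and a short diagram chase with $\phi$, using that $\GG q$ is epimorphic, identifies $\GG c$ with the identity of $\GG Q=\UU E$, so $\GG c$ is an isomorphism of globular sets. Everything then reduces to proving that $c$ itself is an isomorphism in $\opol$, for in that case $\WW h\cong q$ is the coequalizer we want and Beck's criterion applies.

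This last step is the one I expect to be delicate. The functor $\GG$ is far from conservative --- it throws away every generator of $Q$ that is not hereditarily globular --- so knowing $\GG c$ invertible does not, by itself, give $c$ invertible, and it is exactly here that Lemma~\ref{lemma:globlifting} intervenes. The lifting operation on globular morphisms is compatible with composition and identities, so it sends an invertible globular morphism between underlying globular sets of $\omega$-categories to an isomorphism of the corresponding standard resolutions; using this, together with the splitting of $q$ (which presents $Q$ as a retract of the resolution $\WW D$), one can reconstruct a two-sided inverse of $c$. In~\cite{batanin:comfmg} the analogous difficulty was resolved by the fact that the category of computads is a presheaf category --- which, as noted in the introduction, is false here --- and Lemma~\ref{lemma:globlifting} is precisely its substitute. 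Once $c$ is known to be an isomorphism, $\WW$ meets all of Beck's conditions, so it is monadic.
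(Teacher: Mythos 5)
Your setup coincides with the paper's: Beck's criterion for $\WW$-split pairs, conservativity via Lemma~\ref{lemma:ufactor}, transport of the split coequalizer along $\GG$ and the isomorphism $\phi$, and creation of the coequalizer $h\colon D\to E$ with $\UU(E)=\GG(Q)$ by the strictly monadic functor $\UU$. All of that matches the intended argument. The gap is exactly where you predict delicacy: your justification that the comparison map $c\colon Q\to\WW(E)$ is invertible is not an argument. You propose to use that lifting ``sends an invertible globular morphism to an isomorphism of the corresponding standard resolutions,'' but no invertible globular morphism occurs in this situation: $\alpha=\phi^D\circ\GG(a)$ is merely a section of $\UU(h)$, and $\beta$ merely a section of $\UU(f)$, so that principle has nothing to act on, and the sentence ``one can reconstruct a two-sided inverse of $c$'' is a placeholder standing in for the entire content of the theorem.

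What actually closes the argument is the \emph{uniqueness} clause of Lemma~\ref{lemma:globlifting}, applied to the sections themselves rather than to any isomorphism. Lift $\alpha\colon\UU(E)\to\UU(D)$ and $\beta\colon\UU(D)\to\UU(C)$ to $\lift{\alpha}\colon\WW(E)\to\WW(D)$ and $\lift{\beta}\colon\WW(D)\to\WW(C)$. The identities $\UU(h)\circ\alpha=\unit{\UU(E)}$, $\UU(f)\circ\beta=\unit{\UU(D)}$ and $\UU(g)\circ\beta=\alpha\circ\UU(h)$ hold in $\oglob$ because the bottom row of the transported diagram is a split coequalizer; since (by naturality of $\counit{}$) postcomposing a lift with $\WW$ of an $\omega$-morphism again yields a lift of the corresponding composite, uniqueness of liftings forces $\WW(h)\circ\lift{\alpha}=\unit{\WW(E)}$, $\WW(f)\circ\lift{\beta}=\unit{\WW(D)}$ and $\WW(g)\circ\lift{\beta}=\lift{\alpha}\circ\WW(h)$. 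These three equations exhibit $\WW(h)$ itself as a split coequalizer of $\WW(f),\WW(g)$ in $\opol$, which is all that Beck's criterion requires; your map $c$ is then automatically an isomorphism (with inverse $q\circ\lift{\alpha}$), but one never needs to prove that first. Conversely, any honest proof that $c$ is invertible seems to require these same three identities, so the detour through $c$ buys nothing and, as sketched, leaves the decisive step unproved.
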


\begin{proof}
Recall that monadicity means here that  $\ocat$ is {\em equivalent} to 
the category of algebras of the monad $\WW\free\WW$ on
$\opol$. By using the corresponding version of Beck's criterion, this
amounts to show that (i)~$\WW$ reflects isomorphisms and (ii)~if
$f$, $g$ is a parallel pair of $\omega$-morphisms such that the pair $\WW(f)$,
$\WW(g)$ has a split coequalizer in $\opol$, then $f$, $g$ has a
coequalizer in $\ocat$, and $\WW$ preserves coequalizers of such pairs~
(see for instance \cite[VI.7, exercises~3 and~6]{maclane:catfwm}).
\paragraph{}
First, if $f:C\to D$ is an $\omega$-morphism such that $\WW(f)$ is an
isomorphism, then $\GG\WW(f)$ is an isomorphism in $\oglob$ and by
Lemma~\ref{lemma:ufactor}, $\UU(f)$ is an isomorphism. Now, $\UU$  reflects isomorphisms, hence $f$ is an isomorphism. Therefore
$\WW$ reflects isomorphisms as required.
\paragraph{}
Now, let $f,g:C\to D$ be a pair of
 $\omega$-morphisms and suppose
\begin{equation}
  \label{diag:coeqpol}
  \xymatrix{\WW(C) \doubr{\WW(g)}{\WW(f)}& \WW(D) \ar[r]_k\ar@/_1.5pc/[l]_b& P\ar@/_1.5pc/[l]_a}
\end{equation}
is a split coequalizer in $\opol$ where $k\circ a=\unit{P}$, $\WW
(f)\circ b=\unit{\WW(D)}$ and $\WW(g)\circ b=a\circ k$. By applying the
functor $\GG$ to~(\ref{diag:coeqpol}), we get a split coequalizer
in $\oglob$:
\begin{equation}
  \label{diag:coeqglob1}
  \xymatrix{\GG\WW(C) \doubr{\GG\WW(g)}{\GG\WW(f)}& \GG\WW(D)
    \ar[r]_{\GG(k)}\ar@/_1.5pc/[l]_{\GG(b)}& \GG(P)\ar@/_1.5pc/[l]_{\GG(a)}}
\end{equation}
Then, by using the natural isomorphism $\phi$ of
Lemma~\ref{lemma:ufactor}, we obtain the following diagram
\begin{equation}
  \label{diag:coeqglob2}
  \xymatrix{\GG\WW(C) \doubr{\GG\WW(g)}{\GG\WW(f)}\ar[d]_{\phi^C}& \GG\WW(D)
    \ar[rd]|{\GG(k)}\ar@/_1.5pc/[l]_{\GG(b)}\ar[d]_{\phi^D}&
    \\
      \UU(C)\doubr{\UU(g)}{\UU(f)} & \UU(D)\ar[r]^l\ar@/^1.5pc/[l]^{\beta}&\GG(P)\ar@/_1.5pc/[lu]_{\GG(a)}\ar@/^1.5pc/[l]^{\alpha}}
\end{equation}
where $\alpha=\phi^D\circ \GG(a)$, $l=\GG(k)\circ (\phi^D)^{-1}$ and
$\beta=\phi^{C}\circ\GG(b)\circ (\phi^D)^{-1}$. Therefore $l\circ
\alpha=\unit{\GG(P)}$, $\UU(f)\circ \beta=\unit{\UU(D)}$ and 
\begin{eqnarray*}
  \UU(g)\circ\beta & = & \UU(g)\circ \phi^{C}\circ\GG(b)\circ
  (\phi^D)^{-1}\\
                             & = & \phi^D\circ
                             \GG\WW(g)\circ\GG(b)\circ (\phi^D)^{-1}\\
                              & = & \phi^D\circ
                             \GG(a)\circ\GG(k)\circ (\phi^D)^{-1}\\
                             & = & \alpha\circ l
\end{eqnarray*}
and the bottom line of~(\ref{diag:coeqglob2}) is a split coequalizer
diagram in $\oglob$. Now the functor $\UU$ is strictly monadic, so that there is a unique
$\omega$-morphism $h:D\to E$ such that $\UU(E)=\GG(P)$ and $\UU(h)=l$
and moreover this unique morphism makes
\begin{equation}
  \label{diag:coeqcat}
  \xymatrix{C \doubr{g}{f}& D\ar[r]^h & E}
\end{equation}
a coequalizer diagram in $\ocat$. Note that, by construction,  $\UU(E)=\GG(P)$.

It remains to show that $\WW(h):\WW(D)\to\WW(E)$ is a coequalizer of
the pair $\WW(f)$, $\WW(g)$ in $\opol$. By applying
Lemma~\ref{lemma:globlifting} to $\alpha:\UU(E)\to \UU(D)$ and to
$\beta:\UU(D)\to\UU(C)$, we get unique morphisms
$\lift{\alpha}:\WW(E)\to\WW(D)$ and $\lift{\beta}:\WW(D)\to\WW(C)$
satisfying the required commutation condition. Consider the following
diagram:
\begin{equation}
  \label{diag:uniquelift}
  \xymatrix@C=1.5cm{\UU\free\WW\WW(E)\ar[d]_{\UU(\counit{E})}\ar[r]^{\UU\free{\WW}(\lift{\alpha})}
    & \UU\free\WW\WW(D)\ar[d]|{\UU(\counit{D})}
\ar[r]^{\UU\free{\WW}\WW(h)}& \UU\free\WW\WW(E)\ar[d]^{\UU(\counit{E})}\\
\UU(E)  \ar[r]_{\alpha}&\UU(D)\ar[r]_{\UU(h)} & \UU(E)}
\end{equation}
The left-hand square commutes by hypothesis, and the right-hand square
commutes by the naturality of $\counit{}$, whence the outer square
also commutes. As $\UU(h)\circ\alpha=\unit{\UU(E)}$, the uniqueness of
the lifting in Lemma~\ref{lemma:globlifting} implies that $\WW(h)\circ
\lift{\alpha}=\unit{\WW(E)}$. By the same uniqueness argument, we get
$\WW(f)\circ\lift{\beta}=\unit{\WW(D)}$ and $\WW(g)\circ
\lift{\beta}=\lift{\alpha}\circ\WW(h)$. Therefore the following
diagram is a split coequalizer in $\opol$
\begin{displaymath}
 \xymatrix{\WW(C) \doubr{\WW(g)}{\WW(f)}& \WW(D) \ar[r]_k\ar@/_1.5pc/[l]_{\lift{\beta}}& \WW(E)\ar@/_1.5pc/[l]_{\lift{\alpha}}} 
\end{displaymath}
and we are done.
\end{proof}

\section*{Acknowledgements}
Many thanks to Dimitri Ara and Albert Burroni for numerous helpful
conversations on the subject.

\end{document}